\newtheorem{theorem}[subsection]{Theorem}  
\newtheorem{proposition}[subsection]{Proposition}
\newtheorem{lemma}[subsection]{Lemma}
\newtheorem{definition}[subsection]{Definition}
\newtheorem{remark}[subsection]{Remark}
\def\fg{{\mathfrak{g}}}
\def\BC{{\mathbb{C}}}
\def\BN{{\mathbb{N}}}
\def\BR{{\mathbb{R}}}
\def\BZ{{\mathbb{Z}}}
\def\CO{{\mathcal{O}}}
\def\nn{{\mathbb{N}^I}}
\def\nnn{{\mathbb{N}^{I \times \BZ}}}
\def\zzz{{\mathbb{Z}^{I \times \BZ}}}
\def\bpsi{{\boldsymbol{\psi}}}
\def\bu{{\boldsymbol{u}}}
\def\bv{{\boldsymbol{v}}}
\def\bw{{\boldsymbol{w}}}
\def\bx{{\boldsymbol{x}}}
\def\bU{{\boldsymbol{U}}}
\def\bV{{\boldsymbol{V}}}
\def\b0{{\boldsymbol{0}}}
\def\bV{{\mathbf{V}}}
\def\bx{\boldsymbol{x}}
\def\stab{\text{stab}}
\def\estab{\emph{stab}}
\begin{document}

\title[Extremal monomials of $q$-characters]{\Large{\textbf{EXTREMAL MONOMIALS OF $q$-CHARACTERS}}} 

\author[Andrei Negu\cb t]{Andrei Negu\cb t}

\address{École Polytechnique Fédérale de Lausanne (EPFL), Lausanne, Switzerland \newline \text{ } \ \ Simion Stoilow Institute of Mathematics (IMAR), Bucharest, Romania} 

\email{andrei.negut@gmail.com}
\def\uppercasenonmath#1{}
\maketitle

\begin{abstract}  In this short paper, we prove a conjecture of Frenkel-Hernandez, which states that $q$-characters of finite-dimensional simple modules of the quantum affine algebra $U_q(\widehat{\fg})$ are bounded by the Weyl group orbit of the leading monomial under Chari's braid group action.  This generalizes the Weyl group invariance of characters of finite-dimensional representations of $\fg$.  \end{abstract}

\section{Introduction}
\label{sec:intro}

\subsection{The setting}
\label{sub:setting}

Let $\fg$ be a complex semisimple Lie algebra, and let
$$
C = \left(c_{ij} = \frac {2d_{ij}}{d_{ii}} \in \BZ \right)_{i,j \in I}
$$
be its Cartan matrix, where $d_{ij} = (\alpha_i,\alpha_j)$ with respect to a henceforth fixed choice of simple roots $\{\alpha_i\}_{i \in I}$. We will be interested in type 1 finite-dimensional simple modules of the quantum affine algebra (we work with $q \in \BC^* \backslash \{\text{roots of unity}\}$)
\begin{equation}
\label{eqn:simple module}
U_q(\widehat{\fg}) \curvearrowright L(\bpsi)
\end{equation}
As shown in \cite{CP}, the simple modules above are indexed by a monomial $\bpsi$ in symbols
\begin{equation}
\label{eqn:fundamental l-weights}
\{ Y_{k,c} \}_{k \in I, c \in \BC^*}
\end{equation}
One calls the $L(Y_{k,c})$ fundamental representations, since any other module \eqref{eqn:simple module} lies in a suitable tensor product of fundamentals. We will also consider the expressions
\begin{equation}
\label{eqn:simple l-weights}
A_{i,x}^{-1} = Y_{i,xq_i}^{-1} Y_{i,xq_i^{-1}}^{-1} \prod_{j \neq i} \prod_{\ell = \frac {c_{ij}+1}2}^{-\frac {c_{ij}+1}2} Y_{j,xq^{\ell d_{ii}}}
\end{equation}
with $q_i = q^{d_i}$, $d_i = \frac {d_{ii}}2$. As shown in \cite{FM}, the $q$-character (\cite{FR}) of \eqref{eqn:simple module} is an expression
\begin{equation}
\label{eqn:q-character}
\chi_q(L(\bpsi)) = \bpsi \sum_{\text{multisets } \bx = \{x_{i1},x_{i2},\dots\}_{\forall i \in I} \subset \BC^*} \mu_{\bx}^{\bpsi} \prod_{i \in I} A_{i,x_{i1}}^{-1} A_{i,x_{i2}}^{-1} \dots
\end{equation}
The non-negative integers $\mu_{\bx}^{\bpsi}$ are of great interest, and they were first interpreted geometrically in the following seminal result of \cite{HL}, which uses the quiver Grassmannians of \cite{DWZ} for a particular quiver built out of $\fg$ (see Subsection \ref{sub:quiver variety old}).

\begin{theorem}
\label{thm:hl}

(\cite{HL}) For any $\bpsi = Y_{k,c}$ with $k \in I$ and $c \in \BC^*$, we have
\begin{equation}
\label{eqn:hl}
\mu_{\bx}^{\bpsi} = \chi(N_{\bx,\bpsi}^{\estab})
\end{equation}
where $N_{\bx,\bpsi}^{\estab}$ is a certain variety that we will recall in Subsection \ref{sub:quiver variety old}. 

\end{theorem}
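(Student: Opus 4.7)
The strategy is to combine the cluster algebra categorification of the $q$-character ring developed by Hernandez-Leclerc with the Caldero-Chapoton formula for cluster variables, in its generalization to arbitrary quivers with potential due to Derksen-Weyman-Zelevinsky.

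First, I would recall the HL construction of a cluster algebra structure on a subring of the Grothendieck ring of finite-dimensional type $1$ modules over $U_q(\widehat{\fg})$: the initial cluster consists of classes of Kirillov-Reshetikhin modules, and the exchange quiver has vertices labeled by pairs $(i,c) \in I \times \BC^*$ satisfying certain congruence conditions, with arrows dictated by the Cartan data of $\fg$ and the spectral shifts appearing in \eqref{eqn:simple l-weights}. In this cluster algebra, each fundamental module $L(Y_{k,c})$ is reached from the initial seed by an explicit sequence of mutations, and each such mutation corresponds at the level of $q$-characters to multiplication by one of the monomials $A_{i,x}^{-1}$.

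Next, I would apply the Derksen-Weyman-Zelevinsky cluster character formula, which expresses any non-initial cluster variable as
$$
\frac{1}{\prod \text{initial cluster}} \sum_{\be} \chi(\mathrm{Gr}_\be(M)) \cdot (\text{monomial in exchange ratios}),
$$
where $M$ is the generic representation of the Jacobian algebra of the quiver with potential attached to the seed. Applying this to $L(Y_{k,c})$ and expanding in the variables $A_{i,x}^{-1}$, the resulting Laurent expansion matches \eqref{eqn:q-character}, with $\mu_{\bx}^{\bpsi}$ identified as the Euler characteristic of the stratum $N_{\bx,\bpsi}^{\estab}$ cut out in the relevant quiver Grassmannian as the locus of subrepresentations of $M$ whose dimension vector encodes the multiset $\bx$ and which lie in the ``stable'' open stratum controlled by $\bpsi$.

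The core difficulty is setting up the combinatorial dictionary between HL's infinite ice quiver, the spectral-parameter shifts in \eqref{eqn:simple l-weights}, and dimension vectors of subrepresentations. Once this dictionary is in place and the generic Jacobian representation is identified, the description of $\mu_{\bx}^{\bpsi}$ via Euler characteristics is an immediate consequence of DWZ; conversely, without the dictionary, even verifying that the two expansions are formally of the same shape is nontrivial. A secondary technical point is the reduction from an infinite cluster algebra to a finite truncation capturing all $A$-monomials supported in $\chi_q(L(Y_{k,c}))$, which is unproblematic since a fundamental $q$-character involves only finitely many of them.
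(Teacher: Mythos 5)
This statement is not proved in the paper at all: it is quoted from \cite{HL} (with the remark that for general reachable $\bpsi$ it also follows from combining \cite{DWZ} and \cite{KKOP}), and the paper's only original contribution here is the translation of the relevant quiver Grassmannian into the variety $N_{\bx,\bpsi}^{\stab}$ of Subsection \ref{sub:quiver variety old}, via the footnote observing that the quiver Grassmannian of \cite{HL} is the $\BC^*$-fixed locus of $N_{\bx,\bpsi}^{\stab}$ and hence has the same Euler characteristic. So there is no in-paper proof to compare against; your proposal is an attempted reconstruction of the cited argument, and its overall route (cluster structure on the Grothendieck ring plus the Derksen--Weyman--Zelevinsky cluster character) is indeed the correct one from the literature.

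As a proof, however, the sketch has a genuine gap: the assertion that $L(Y_{k,c})$ ``is reached from the initial seed by an explicit sequence of mutations'' is precisely the main theorem of \cite{HL}, not an input one can take for granted. Establishing it requires showing that the truncated $q$-characters of Kirillov--Reshetikhin modules satisfy the cluster exchange relations, which Hernandez--Leclerc do via the $T$-system functional relations and delicate properties of truncated $q$-characters; your sketch assumes this and concentrates instead on the (comparatively routine) bookkeeping between spectral parameters and dimension vectors. Moreover, the claim that ``each such mutation corresponds at the level of $q$-characters to multiplication by one of the monomials $A_{i,x}^{-1}$'' is not accurate: the mutations realize $T$-system relations among KR modules, and the $A_{i,x}^{-1}$-expansion of \eqref{eqn:q-character} only emerges after the cluster character ($F$-polynomial) is rewritten in the $Y$-variables through the HL change of variables. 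Finally, to land on the statement as formulated here you would still need the identification of the DWZ quiver Grassmannian with $N_{\bx,\bpsi}^{\stab}$ (equations \eqref{eqn:1}--\eqref{eqn:3} plus stability), which in this paper is handled by the $\BC^*$-fixed-point argument cited from \cite{N2}; your sketch does not address this step.
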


\noindent For more general (so-called reachable) monomials $\bpsi$, Theorem \ref{thm:hl} can be proved by combining the results of \cite{DWZ} and \cite{KKOP}. We also refer to \cite{N1, N2} for some recent geometric realizations of the numbers $\mu_{\bx}^{\bpsi}$ that go beyond finite type $\fg$ and reachable $\bpsi$.

\subsection{The extremal monomial conjecture}
\label{sub:conjecture}

We will consider the Weyl group $W$ associated to $\fg$, generated by simple reflections $\{s_i\}_{i \in I}$ in the hyperplanes $\alpha^\perp_i$, and the corresponding braid group. It was shown in \cite{C} that the assignment
\begin{equation}
\label{eqn:weyl action}
S_i(Y_{j,x}) = Y_{j,x} A_{i,x q_i}^{-\delta_{ij}} \quad \Rightarrow \quad S_i(A_{j,x}^{-1}) = \begin{cases} A_{i,xq_i^{-2}}& \text{if }i = j \\ A_{j,x}^{-1} \prod_{\ell =\frac {c_{ij}}2}^{-\frac {c_{ij}}2-1} A_{i,xq^{\ell d_{ii}}}&\text{if }i \neq j \end{cases}
\end{equation}
induces a braid group action on the set of Laurent monomials in the symbols \eqref{eqn:fundamental l-weights} (our $S_i$ are the $T_i^{-1}$ of \cite{FH}). Therefore, we may define $S_w$ for any Weyl group element $w \in W$, simply by taking a reduced decomposition of $w$ into simple reflections. With this in mind, our main result (discovered in \cite[Conjecture 4.4]{FH}) is the following.

\begin{theorem}
\label{thm:main}

For any multiset $\bx = \{x_{i1},x_{i2},...\}_{i \in I} \subset \BC^*$, we have $\mu_{\bx}^{\bpsi} = 0$ unless
\begin{equation}
\label{eqn:main}
S_w \left(\bpsi \prod_{i \in I} A_{i,x_{i1}}^{-1} A_{i,x_{i2}}^{-1} \dots \right) \in \bpsi \prod_{i \in I} \mathop{\prod_{\text{various } c \in \BC^*}}_{\text{with multiplicities}} A_{i,c}^{-1}
\end{equation}
for all $w \in W$ (we note that \eqref{eqn:main} was already proved for $w\in \{s_i,w_0\}_{i \in I}$ in \cite{FH}).
	
\end{theorem}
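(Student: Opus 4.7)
My approach leverages the geometric interpretation of $\mu_\bx^\bpsi$ provided by Theorem \ref{thm:hl} (together with its extension to reachable $\bpsi$ via \cite{DWZ, KKOP}), and lifts the braid-group action \eqref{eqn:weyl action} on Laurent monomials to a categorical action on the underlying quiver category. Concretely, if $\mu_\bx^\bpsi \neq 0$ then $N_{\bx,\bpsi}^{\estab}$ contains a representation $V$ whose dimension vector encodes the multiset $\bx$, equivalently the monomial $\bpsi \prod A_{i,x_{ij}}^{-1}$.

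The core technical step is to construct, for each $i \in I$, a reflection-type functor $\Phi_i$ (in the spirit of Bernstein-Gelfand-Ponomarev or cluster-algebraic mutation) whose effect on dimension vectors matches the action of $S_i$ on monomials after reducing to the canonical form $\bpsi \cdot \prod A_{i,c}^{\pm 1}$. Because $\Phi_i(V)$ is a genuine representation, its dimension vector is non-negative; by the dictionary between dimension vectors and canonical monomial forms, this forces all exponents of $A$'s in $S_i(\bpsi \prod A^{-1})$ to be $\leq 0$, which is the $w = s_i$ case of \eqref{eqn:main} (already proved in \cite{FH}). Iterating $\Phi_{i_1} \circ \cdots \circ \Phi_{i_k}$ along a reduced expression for $w$ then produces a legitimate representation whose existence witnesses \eqref{eqn:main} for arbitrary $w \in W$.

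The main obstacle is that $S_i$ on monomials may formally introduce positive powers of $A_{i,c}$ that cancel only via non-trivial identities among the $Y_{j,c}$'s; consequently $\Phi_i$ must absorb these cancellations and is therefore more delicate than a naive reflection functor — essentially, one must categorify not $S_i$ itself but the composition of $S_i$ with the canonical simplification. A secondary difficulty is extending from reachable to arbitrary dominant $\bpsi$: here one can either reduce to the reachable case by a tensor-product/factorization argument (noting that arbitrary $\bpsi$ appears as a factor of reachable monomials) or invoke the broader geometric models of \cite{N1, N2}.
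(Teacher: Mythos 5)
Your proposal correctly identifies the geometric strategy that the paper uses: lift the braid-group action $S_i$ on monomials to reflection-type maps on a geometric model for $L(\bpsi)$, and then propagate non-emptiness (equivalently, non-negativity of dimension vectors) along a reduced word for $w$. That is exactly the paper's plan. However, two load-bearing pieces are missing from your sketch, and the second is a genuine gap in the argument as stated.

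First, the construction of the reflection maps. You acknowledge that $\Phi_i$ cannot be a naive BGP reflection functor because it must ``absorb cancellations,'' but you do not say how. The paper's resolution is to enlarge the quiver Grassmannian $N_{\bx,\bpsi}^{\estab}$ to a larger (co)framed quiver variety $P_{\bv,\bw}^{\theta\text{-stab}}$ by adding coframing maps $B_i^a$ and replacing the original equations with a preprojective-type relation \eqref{eqn:1 bis}. These $B$-maps are precisely what makes $\bar{V}_i^a = \text{Ker}\,\Phi_i^a$ part of a legitimate (co)framed representation of the right dimension vector; without them, the reflected object has no natural home, and the ``dictionary between dimension vectors and canonical monomial forms'' you invoke has nothing to act on.

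Second, and more seriously, the iteration ``$\Phi_{i_1}\circ\cdots\circ\Phi_{i_k}$ along a reduced expression for $w$ produces a legitimate representation'' is asserted but not justified, and this is where the reduced-word hypothesis must actually enter. The reflection at vertex $i$ is only constructed under a sign hypothesis ($\theta_i < 0$ in the paper's formulation), and one must check that this hypothesis holds at every intermediate stage. The paper handles this by having the reflection simultaneously transform the stability parameter $\theta \mapsto s_i(\theta)$, taking the initial $\theta$ in the open negative Weyl chamber, and using the standard fact that for a reduced word $w = s_{i_t}\cdots s_{i_1}$ all of $\alpha_{i_1}, s_{i_1}\alpha_{i_2}, \dots, s_{i_1}\cdots s_{i_{t-1}}\alpha_{i_t}$ are positive roots. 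This is exactly what guarantees $\big(s_{i_{u-1}}\cdots s_{i_1}(\theta)\big)_{i_u} < 0$ for every $u$. Without a device of this kind, the composition of reflections need not be defined, and your argument never uses the reduced-word assumption even though it is essential. (As a minor point, the reduction from general dominant $\bpsi$ to $\bpsi = Y_{k,c}$ is already available in \cite[Remark 4.5]{FH}, which is simpler than the tensor-product/geometric-model routes you suggest.)
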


\noindent In other words, the non-zero monomials that appear in $\chi_q(L(\bpsi))$ are those which lie in the intersection of $|W|$ many cones whose vertices are the monomials $S_w^{-1}(\bpsi)$ as $w \in W$ (note also the result of \cite{C,CM}, who proved that the monomials at the vertices of these cones all have coefficient 1). Theorem \ref{thm:main} generalizes the well-known Weyl group invariance of ordinary characters of finite-dimensional $U_q(\fg)$-modules. 

\medskip

\noindent The idea of the proof of Theorem \ref{thm:main} is to enlarge $N_{\bx,\bpsi}^{\stab}$ using certain generalizations of graded Nakajima quiver varieties (\cite{Na2}), which we denote by 
\begin{equation}
\label{eqn:p intro}
P_{\bv,\bw}^{\theta\text{-stab}}
\end{equation} 
and to define functions $\mathscr{S}_i$ as in \eqref{eqn:technical} between them that underlie the assignments \eqref{eqn:weyl action}. Thus, if we had $\mu_{\bx}^{\bpsi} \neq 0$ and some $S_w = S_{i_t} \dots S_{i_1}$ that failed property \eqref{eqn:main}, this would correspond to a composition $\mathscr{S}_{i_t} \circ \dots \circ \mathscr{S}_{i_1}$ of functions which send a non-empty quiver variety to an empty quiver variety (contradiction). We note that we will simply work with \eqref{eqn:p intro} as sets of points, and thus abuse the term ``variety". It is reasonable to expect that one can make \eqref{eqn:p intro} into quasiprojective varieties using geometric invariant theory, but this goes beyond the scope of this short paper.

\subsection{Acknowledgements} I would like to thank David Hernandez for his great help in understanding the representation theory of quantum affine algebras.

\section{The proof}
\label{sec:proof}

The set $\BN$ is assumed to contain 0 throughout the paper. As explained in \cite[Remark 4.5]{FH}, it suffices to prove Theorem \ref{thm:main} in the particular case 
\begin{equation}
\label{eqn:restriction}
\bpsi=Y_{k,c}
\end{equation} 
for any $k \in I$ and $c \in \BC^*$. For this $\bpsi$, we can only have $\mu_{\bx}^{\bpsi} \neq 0$ for those multisets $\bx$ which lie in $cq^{\BZ}$. Therefore, we will henceforth replace multisets $\bx$ by the tuples 
\begin{equation}
\label{eqn:replace}
\bx \leadsto \bv = (v_i^a)_{i \in I}^{a \in \BZ} \in \nnn
\end{equation}
$$
v_i^a = \Big| \Big\{\text{number of times }cq^a \text{ appears among } x_{i1},x_{i2},\dots \Big\}\Big|
$$

\subsection{Quiver Grassmannians}
\label{sub:quiver variety old}

Let us describe the varieties that feature in Theorem \ref{thm:hl}, although we will do so in the equivalent language of \cite{N2} rather than the original one of \cite{HL}. Consider the quiver $Q$ with vertex set $I \times \BZ$ and arrows $(j,a-d_{ij}) \leftarrow (i,a)$ for all $i,j \in I$ and $a \in \BZ$. A representation of the quiver $Q$ consists of a collection of vector spaces $\{V_i^a\}_{i \in I}^{a \in \BZ}$ and linear maps
\begin{equation}
\label{eqn:quiver representation}
\bV = \left\{V_{j}^{a-d_{ij}} \xleftarrow{^{a-d_{ij}}_j\square_i^a} V_i^a \right\}_{i,j \in I}^{a \in \BZ}
\end{equation}
We will abbreviate for all $i,i',i'' \in I$, $a \in \BZ$ and $\ell \geq 0$
\begin{equation}
\label{eqn:abbreviate 1}
^{a-\ell d_{ii}}_i\square_i^a = \left( ^{a-\ell d_{ii}}_i\square_i^{a-(\ell-1)d_{ii}} \right) \circ \dots \circ \left( ^{a-2d_{ii}}_i\square_i^{a-d_{ii}} \right) \circ \left( ^{a-d_{ii}}_i\square_i^a \right)
\end{equation}
\begin{equation}
\label{eqn:abbreviate 2}
\cdots ^{a}_{i}\square_{i'}^{a+d_{ii'}} \square_{i''}^{a+d_{ii'}+d_{i'i''}} \cdots =  \dots \circ \left( ^{a}_i\square_{i'}^{a+d_{ii'}} \right) \circ \left( ^{a+d_{ii'}}_{i'}\square_{i''}^{a+d_{ii'}+d_{i'i''}} \right) \circ \dots
\end{equation}
The dimension of a quiver representation \eqref{eqn:quiver representation} is $\dim \bV = (\dim V_i^a)_{i \in I}^{a \in \BZ} \in \nnn$. The next step is to consider framed quiver representations; due to our specific choice of \eqref{eqn:restriction}, we will enhance the datum \eqref{eqn:quiver representation} by adding to it a ``framing" vector $\xi \in V_k^{d_k}$. Let $M_{\bx,\bpsi}$ be the stack of $\bv$-dimensional framed quiver representations (with $\bx$ and $\bv$ related by the one-to-one correspondence in \eqref{eqn:replace}), and let
\begin{equation}
\label{eqn:substack}
N_{\bx,\bpsi} \hookrightarrow M_{\bx,\bpsi}
\end{equation}
denote the closed substack cut out by the following equation for all $i \in I$ and $a \in \BZ$
\begin{equation}
\label{eqn:1}
\sum_{j \neq i} \sum_{\ell = 0}^{-c_{ij}-1} 
{^{a+d_{ii}}_i\square^{a-\ell d_{ii}-2d_{ij}}_i} \square^{a-\ell d_{ii}-d_{ij}}_j \square^{a-\ell d_{ii}}_i \square_{i}^{a}  = 0
\end{equation}
(see notation \eqref{eqn:abbreviate 1}-\eqref{eqn:abbreviate 2}) together with the following equation
\begin{equation}
\label{eqn:2}
{^{a+d_{ij}}_j\square_j^{a-d_{ij}}} \square_i^a + {^{a+d_{ij}}_j\square_i^{a+2d_{ij}}} \square_i^a  = 0
\end{equation} 
for all $i \neq j$ and $a \in \BZ$, and finally with (recall that $d_k = \frac {d_{kk}}2$)
\begin{equation}
\label{eqn:3}
\left( ^{-d_k}_k\square_k^{d_k} \right) \xi = 0
\end{equation}
A framed quiver representation will be called stable if it has no proper subrepresentations which contain $\xi$. Intersecting \eqref{eqn:substack} with the open locus of stable framed quiver representations gives us a closed embedding
\begin{equation}
	\label{eqn:subvariety}
	N_{\bx,\bpsi}^{\stab} \hookrightarrow M_{\bx,\bpsi}^{\stab}
\end{equation}
The varieties that appear in Theorem \ref{thm:hl} are the aforementioned $N_{\bx,\bpsi}^{\stab}$. \footnote{The previous statement is a slight lie, which we will now rectify. As explained in \cite[Proposition 2.6]{N2}, the quiver Grassmannian which featured in \cite{HL} is actually the $\BC^*$-fixed locus of $N_{\bx,\bpsi}^{\stab}$ with respect to the $\BC^*$-action that scales all the quiver maps in \eqref{eqn:quiver representation} with weight 1. Since a variety and its $\BC^*$-fixed locus have the same Euler characteristic, we may use our $N_{\bx,\bpsi}^{\stab}$ in formula \eqref{eqn:hl}.}


\subsection{Quiver varieties}
\label{sub:quiver variety new}

We will now generalize the varieties from the previous Subsection by including general (co)framing maps
\begin{equation}
\label{eqn:coframing}
V_i^{a+d_i} \xleftarrow{A_i^a} W_i^a \xleftarrow{B_i^a} V_i^{a-d_i}
\end{equation}
where $W_i^a$ are vector spaces whose dimensions are indexed by $\bw = (w_i^a)_{i \in I}^{a\in \BZ} \in \nnn$.

\begin{definition}
\label{def:new}

Let $P_{\bv,\bw}$ denote the stack parameterizing (co)framed $\bv$-dimensional quiver representations, i.e. collections of data \eqref{eqn:quiver representation} and \eqref{eqn:coframing} that satisfy relations
\begin{equation}
\label{eqn:1 bis}
\left( \sum_{j \neq i} \sum_{\ell = 0}^{-c_{ij}-1} 
{^{a+d_{ii}}_i\square^{a-\ell d_{ii}-2d_{ij}}_i} \square^{a-\ell d_{ii}-d_{ij}}_j \square^{a-\ell d_{ii}}_i \square_{i}^{a} \right) + A_i^{a+d_i} B_i^{a+d_i} = 0
\end{equation}
\begin{equation}
	\label{eqn:4}
\left( ^{a-d_{i}}_i\square_i^{a+d_i}\right) A_i^a = 0
\end{equation}
\begin{equation}
	\label{eqn:5}
	B_i^a \left( ^{a-d_i}_i\square_i^{a+d_i}\right) = 0
\end{equation}
for all $i \in I, a \in \BZ$, together with relation \eqref{eqn:2}.

\end{definition}

\noindent We will let $P_{\bv,\bw}^{\stab} \subset P_{\bv,\bw}$ denote the open subset of stable points, i.e. those with no proper subrepresentations that include the images of all the $A$ maps. Since the stability does not involve the $B$ maps at all, we have a closed embedding
\begin{equation}
\label{eqn:closed embedding} 
N_{\bx,\bpsi}^{\stab} \subset P_{\bv,\bw}^{\stab}
\end{equation}
obtained by setting $B = 0$, where $\bx$ and $\bv$ are related as in \eqref{eqn:replace}, and for $\bpsi$ of \eqref{eqn:restriction} we let $\bw \in \nnn$ denote the tuple with 1 on position $(k,0)$, and 0 everywhere else.

\begin{remark}
\label{rem:nakajima}

The spaces $P_{\bv,\bw}^{\estab}$ are to Nakajima quiver varieties as the algebras of \cite{GLS, HL} are to preprojective algebras of quivers (see also \cite{ST}). When $\fg$ is simply-laced, $P_{\bv,\bw}^{\estab}$ coincide with the graded Nakajima quiver varieties of \cite{Na2}. For general $\fg$, various flavors of this construction appeared in \cite{TY, Y}, where the authors also explored reflections akin to the ones that we will shortly construct, and in \cite{VV} in the context of critical $K$-theory (which is highly relevant to the study of $q$-characters).

\end{remark}

\subsection{$\theta$-stability}
\label{sub:stability}

We will need to consider more general stability conditions, following the work of King and Nakajima. Let $\{\omega_i\}_{i \in I}$ denote the fundamental weights corresponding to our fixed choice of simple roots, and consider 
\begin{equation}
\label{eqn:stability condition}
\theta = \sum_{i \in I} \theta_i \omega_i 
\end{equation}
with $\theta_i \in \BR$. We will only consider generic $\theta$'s, i.e. those which are not situated on any root hyperplane. For any $\bv = (v_{i}^a)_{i \in I}^{a \in \BZ} \in \BN^{I \times \BZ}$, we let $v_i = \sum_{a \in \BZ} v_i^a$ and define
\begin{equation}
\label{eqn:stability pairing}
(\theta , \bv) = \sum_{i \in I} d_i \theta_i v_i
\end{equation}

\begin{definition}
\label{def:stable}

A point of $P_{\bv,\bw}$ is called $\theta$-stable whenever the following hold:

\begin{itemize}[leftmargin=*]	
	
\item if $\{U_i^a \subseteq V_i^a\}_{i \in I}^{a \in \BZ}$ is a $\bu$-dimensional subrepresentation contained in $\emph{Ker }B$, then 
\begin{equation}
\label{eqn:stable 1}
(\theta, \bu) \leq 0
\end{equation}

\item if $\{U_i^a \subseteq V_i^a\}_{i \in I}^{a \in \BZ}$ is a $\bu$-dimensional subrepresentation that contains $\emph{Im }A$, then 
\begin{equation}
\label{eqn:stable 2}
(\theta, \bv-\bu) \geq 0
\end{equation}

\end{itemize} 
	
\noindent (to be precise, the above definition should be called $\theta$-semistability, but the genericity of $\theta$ implies that there are no strictly semistable points). When $\theta$ satisfies $\theta_i < 0$ for all $i \in I$, $\theta$-stability is equivalent to the stability of Subsection \ref{sub:quiver variety new}.
	
\end{definition}

\noindent Let $P_{\bv,\bw}^{\theta\text{-stab}}$ be the subset of $P_{\bv,\bw}$ consisting of $\theta$-stable points. When $\fg$ is simply-laced (i.e. $d_{ii}=2$ for all $i$) it is easy to prove that any $\theta$-stable point must satisfy
\begin{equation}
\label{eqn:vanish}
^{a-d_{ii}}_i\square_i^a = 0, \quad \forall i\in I, \ a \in \BZ
\end{equation}
and thus the $P_{\bv,\bw}^{\theta\text{-stab}}$ are none other than Nakajima's graded quiver varieties (\cite{Na2}).

\subsection{Reflections}
\label{sub:reflections}

The braid group action of \eqref{eqn:weyl action} induces a braid group action
\begin{equation}
	\label{eqn:weyl dimension vector def}
S_i : \zzz \rightarrow \zzz
\end{equation}
where $S_i (\bv) = \bar{\bv}$ is given by
\begin{equation}
\label{eqn:weyl dimension vector}
S_i\left(\bpsi \prod_{j \in I} \prod_{a \in \BZ} A_{j,cq^a}^{-v_j^a} \right) = \bpsi \prod_{j \in I} \prod_{a \in \BZ} A_{j,cq^a}^{-\bar{v}_j^a}
\end{equation}
Moreover, we have a Weyl group action on stability conditions \eqref{eqn:stability condition}, which is induced by the Weyl group action on weights. Our main technical result is the following, inspired by Nakajima's reflections (which were developed by many authors, see \cite{L, M, Na, TY, Y}).

\begin{proposition}
\label{prop:technical}
	
If $\theta_i < 0$, then for any $\bv,\bw \in \nnn$ we have a function \footnote{If $\theta_i > 0$, then by reversing the arrows in Subsection \ref{sub:technical}, one would obtain a function 
\begin{equation}
\label{eqn:technical bis}
\mathscr{S}'_i : P_{\bv,\bw}^{\theta\text{-stab}} \rightarrow P_{S_i'(\bv),\bw}^{s_i(\theta)\text{-stab}}
\end{equation}
where $S_i'$ is defined by formula \eqref{eqn:weyl action} with $q$ replaced by $q^{-1}$. It is reasonable to expect that \eqref{eqn:technical bis} and \eqref{eqn:technical} satisfy the braid relations (see also \cite{TY, Y}). In fact, for simply-laced $\fg$, this follows from the result of \cite{L,M,Na} by replacing Nakajima quiver varieties with their graded versions.}
\begin{equation}
\label{eqn:technical}
\mathscr{S}_i : P_{\bv,\bw}^{\theta\emph{-stab}} \rightarrow P_{S_i(\bv),\bw}^{s_i(\theta)\emph{-stab}}
\end{equation}
	
\end{proposition}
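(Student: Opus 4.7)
My plan is to construct $\mathscr{S}_i$ as a Nakajima-style reflection functor adapted to the present graded, non-simply-laced setting with framing.  The output $(\bar{\bV},\bar{A},\bar{B})$ will coincide with the input $(\bV,A,B)$ away from vertex $i$, and the new spaces $\bar V_i^{\bullet}$ will arise as the middle cohomology of three-term complexes naturally attached to the relations \eqref{eqn:1 bis}.

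For each $a\in\BZ$ I will repackage \eqref{eqn:1 bis} at vertex $(i,a)$ as a complex
\[
V_i^{a} \xrightarrow{\,\mu_i^a\,} M_i^a \xrightarrow{\,\nu_i^a\,} V_i^{a+d_{ii}},\qquad \nu_i^a\mu_i^a=0,
\]
whose middle term
\[
M_i^a \ =\ W_i^{a+d_i} \ \oplus \bigoplus_{j\neq i,\ 0\le\ell\le -c_{ij}-1} V_j^{a-\ell d_{ii}-d_{ij}}
\]
collects the intermediate vector spaces through which each summand of \eqref{eqn:1 bis} passes.  The components of $\mu_i^a$ are $B_i^{a+d_i}$ together with, for each $(j,\ell)$, the composition $V_i^a\to V_j^{a-\ell d_{ii}-d_{ij}}$ obtained by $\ell$ iterations of the self-loop at $i$ followed by the crossing arrow $i\to j$; the components of $\nu_i^a$ are $A_i^{a+d_i}$ together with, for each $(j,\ell)$, the composition $V_j^{a-\ell d_{ii}-d_{ij}}\to V_i^{a+d_{ii}}$ given by the crossing arrow $j\to i$ followed by the appropriate iterated self-loop at $i$.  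With these choices $\nu_i^a\mu_i^a=0$ is exactly \eqref{eqn:1 bis}.

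The crux of the argument is to use $\theta_i<0$ and $\theta$-stability to prove that, for every $a\in\BZ$, the map $\mu_i^a$ is injective and $\nu_i^a$ is surjective.  For injectivity: any $v\in\ker\mu_i^a$ lies in $\ker B_i^{a+d_i}$ and is killed by every non-self-loop arrow leaving $V_i^{a}$; using \eqref{eqn:2} and the self-loop abbreviations \eqref{eqn:abbreviate 1}-\eqref{eqn:abbreviate 2} to close up under the remaining maps, the subrepresentation of $\bV$ generated by $v$ sits in $\ker B$, and \eqref{eqn:stable 1} combined with $\theta_i<0$ forces $v=0$.  Surjectivity of $\nu_i^a$ is obtained dually from \eqref{eqn:stable 2} by examining the annihilator of the image of $\nu_i^a$ inside $V_i^{a+d_{ii}}$.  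Controlling the iterated self-loops in the non-simply-laced case is the main technical obstacle and is precisely what necessitates the particular formulation of $\theta$-stability adopted in Definition \ref{def:stable}.

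Given these exactness properties, define $\bar V_i^{\bullet}$ to be the cohomologies $\ker\nu_i^a/\text{im}\,\mu_i^a$ (placed at the appropriate grading) and $\bar V_j^a=V_j^a$ for $j\neq i$.  A dimension count using exactness matches $\dim\bar V_i^{\bullet}$ with the corresponding component of $S_i(\bv)$ prescribed by \eqref{eqn:weyl dimension vector}; in particular, non-emptiness of $P_{\bv,\bw}^{\theta\text{-stab}}$ forces $S_i(\bv)\in\nnn$, so $P_{S_i(\bv),\bw}$ is defined.  The new quiver maps on $\bar{\bV}$ are extracted from the components of $\mu_i^\bullet,\nu_i^\bullet$: their non-$W$ parts build the $\bar\square$-arrows linking $\bar V_i^\bullet$ with the adjacent $V_j^\bullet$, and their $W$-parts furnish $\bar A_i^\bullet,\bar B_i^\bullet$ with the roles of $A$ and $B$ at vertex $i$ interchanged relative to the original data.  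A direct if lengthy diagram chase, using \eqref{eqn:2} to slide maps through vertex $i$, verifies relations \eqref{eqn:1 bis}--\eqref{eqn:5} for the new datum; and $s_i(\theta)$-stability of the output is the translation of Definition \ref{def:stable} through the kernel/cokernel construction, the sign flip $\theta_i\mapsto-\theta_i$ precisely accounting for the swap between the two stability inequalities at vertex $i$.
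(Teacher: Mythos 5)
Your approach correctly identifies the three-term complex
\[
V_i^a\xrightarrow{\Psi_i^a}\; W_i^{a+d_i}\bigoplus_{j\neq i}\bigoplus_\ell V_j^{a+\ell d_{ii}}\;\xrightarrow{\Phi_i^a}\;V_i^{a+d_{ii}}
\]
(your $\mu_i^a,\nu_i^a$), and your surjectivity argument for $\nu_i^a=\Phi_i^a$ matches the paper's Lemma on $\Phi_i^a$. However, there are two intertwined errors that are fatal to the construction as you describe it.

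First, the new space $\bar V_i^a$ must be $\ker\Phi_i^a$ itself, \emph{not} the middle cohomology $\ker\Phi_i^a/\operatorname{im}\Psi_i^a$. In the ungraded Nakajima setting the two ends of the analogous complex are the same vector space $V_i$, and the cohomology correctly accounts for the $-2v_i$ in the reflected dimension; but here the two ends $V_i^a$ and $V_i^{a+d_{ii}}$ live in different graded slots. The action $S_i(A_{i,cq^a}^{-1})$ shifts the grading, so the reflected dimension $(S_i(\bv))_i^a$ only involves one subtraction, namely $-v_i^{a+d_{ii}}$. Taking the quotient by $\operatorname{im}\Psi_i^a$ would subtract $v_i^a$ as well, producing the wrong dimension vector, and then the rest of the argument (non-emptiness forcing $S_i(\bv)\in\nnn$, the stability comparison) would no longer be tied to the intended $S_i$.

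Second, the injectivity claim for $\mu_i^a=\Psi_i^a$ does not follow from $\theta_i<0$, and is in fact false in general. Your argument produces from $v\in\ker\Psi_i^a$ a subrepresentation $\bU$ of $\bV$ supported only at vertex $i$ and contained in $\ker B$ (using \eqref{eqn:2} and \eqref{eqn:5}); but then $(\theta,\bu)=d_i\theta_i u_i$ is automatically $\le 0$ when $\theta_i<0$, so inequality \eqref{eqn:stable 1} is \emph{satisfied}, not violated, and no contradiction arises. (The simplest counterexample is $\fg=\fsl_2$ with $B=0$, where $\Psi_i^a=B_i^{a+d_i}=0$ on all of $V_i^a$.) Because injectivity fails, the dimension count for the middle cohomology cannot even be carried out as stated. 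The paper sidesteps this entirely: it only needs $\Phi_i^a\circ\Psi_i^a=0$ to produce the induced map $V_i^a\to\ker\Phi_i^a$, which becomes one half of the new quiver data ($\bar A_i$ and the ``incoming'' arrows ${}_i\bar\square_j$), while the other half ($\bar B_i$ and the ``outgoing'' arrows ${}_j\bar\square_i$) comes from the projection of $\ker\Phi_i^a$ onto the summands of the middle term. You should replace ``middle cohomology'' by ``kernel of $\Phi_i^a$'' throughout, drop the injectivity claim, and recheck the stability comparison with the corrected dimension formula.
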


\noindent If the LHS of \eqref{eqn:technical} is nonempty, then the RHS is also non-empty, which in particular implies that $S_i(\bv) \in \nnn$. We will prove Proposition \ref{prop:technical} in the following Subsection, but let us first show how to deduce our main Theorem from it.

\begin{proof} \emph{of Theorem \ref{thm:main}}: Let $\bw \in \nnn$ be the tuple with 1 on position $(k,0)$ and 0 everywhere else, and let $\bpsi$ be given by \eqref{eqn:restriction}. Consider any $w \in W$ and let us consider a reduced decomposition $w = s_{i_t} \dots s_{i_1}$. It is well-known that the roots
$$
\alpha_{i_1}, s_{i_1}(\alpha_{i_2}),\dots,s_{i_1}\dots s_{i_{t-1}}(\alpha_{i_t})
$$
are all positive. Therefore, if we fix any weight $\theta$ in the (interior of the) negative Weyl chamber, we will have
$$
(\theta, s_{i_1}\dots s_{i_{u-1}}(\alpha_{i_u})) < 0
$$
for all $u \in \{1,\dots,t\}$. In turn, this implies that the weight
$$
s_{i_{u-1}} \dots s_{i_1} (\theta)
$$
has negative coefficient of $\omega_{i_u}$, for all $u \in \{1,\dots,t\}$. Thus, Proposition \ref{prop:technical} implies that we have a chain of functions
\begin{equation}
\label{eqn:chain}
P_{\bv,\bw}^{\theta\text{-stab}} \xrightarrow{\mathscr{S}_{i_1}} P_{S_{i_1}(\bv),\bw}^{s_{i_1}(\theta) \text{-stab}} \xrightarrow{\mathscr{S}_{i_2}} P_{S_{i_2} S_{i_1}(\bv),\bw}^{s_{i_2}s_{i_1}(\theta)\text{-stab}} \xrightarrow{\mathscr{S}_{i_3}} \dots \xrightarrow{\mathscr{S}_{i_t}} P_{S_{i_t} \dots S_{i_1}(\bv),\bw}^{s_{i_t} \dots s_{i_1}(\theta)\text{-stab}}
\end{equation}
Thus, LHS $\neq \varnothing \Rightarrow$ RHS $\neq \varnothing$. Let us now go back to the setting of Theorem \ref{thm:main}. Assume $\mu_{\bx}^{\bpsi} \neq 0$ for a certain multiset $\bx$ of complex numbers, which corresponds to a dimension vector $\bv \in \nnn$ as in \eqref{eqn:replace}. Then Theorem \ref{thm:hl} implies that $N_{\bx,\bpsi}^{\stab} \neq \varnothing$, which by \eqref{eqn:closed embedding} implies that $P_{\bv,\bw}^{\stab} = P_{\bv,\bw}^{\theta\text{-stab}} \neq \varnothing$. The existence of \eqref{eqn:chain} implies that 
$$
P_{S_w(\bv),\bw}^{w(\theta) \text{-stab}} \neq \varnothing \quad \Rightarrow \quad S_w(\bv) \in \nnn
$$
By \eqref{eqn:weyl dimension vector}, the right-most property above precisely implies \eqref{eqn:main}. \end{proof}
 
\subsection{Constructing $\mathscr{S}_i$:}
\label{sub:technical}

We generalize the idea of \cite[Section 3(ii)]{Na}. For any
\begin{equation}
\label{eqn:point p}
\bV = \left\{ V_{j}^{a-d_{ij}} \xleftarrow{^{a-d_{ij}}_j\square_i^a} V_i^a, \quad V_i^{a+d_i} \xleftarrow{A_i^a} W_i^a \xleftarrow{B_i^a} V_i^{a-d_i} \right\}_{i,j \in I}^{a \in \BZ} \in P_{\bv,\bw}^{\theta\text{-stab}}
\end{equation}
consider the linear map
\begin{equation}
\label{eqn:complex}
\Phi_i^a : W_i^{a+d_{i}} \bigoplus_{j \neq i} \bigoplus_{\ell = \frac {c_{ij}}2+1}^{-\frac {c_{ij}}2} V_j^{a+\ell d_{ii}} \xrightarrow{\begin{bmatrix} A_i^{a+d_{i}} \\ {^{a+d_{ii}}_i\square_i^{a-d_{ij}+\ell d_{ii}}} \square_j^{a+\ell d_{ii}} \end{bmatrix}^T} V_i^{a+d_{ii}}
\end{equation}

\begin{lemma}
\label{lem:surj}

The map $\Phi_i^a$ is surjective if $\theta_i < 0$.

\end{lemma}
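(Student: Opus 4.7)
The plan is to argue by contradiction using the second $\theta$-stability condition of Definition~\ref{def:stable} (the one requiring $(\theta, \bv - \bu) \geq 0$ for subrepresentations containing $\mathrm{Im}(A)$). Suppose $\Phi_i^a$ is not surjective, so $\mathrm{Im}(\Phi_i^a) \subsetneq V_i^{a+d_{ii}}$. I would construct a subrepresentation $U \subseteq \bV$ of the form: $U_i^{a+d_{ii}} := \mathrm{Im}(\Phi_i^a)$ and $U_j^b := V_j^b$ at all other vertices, with small iterated adjustments at the tower $(i, a+kd_{ii})$, $k \geq 2$, dictated by self-loop propagation. By design $U$ contains $\mathrm{Im}(A)$: the image of $A_i^{a+d_i}$ at the shrunk position lies in $\mathrm{Im}(\Phi_i^a)$ by the very definition of $\Phi_i^a$, while relation \eqref{eqn:4} forces the same inclusion at the other shrunk positions, since $\mathrm{Im}(A)$ is annihilated by self-loops and hence lies in every preimage under them.

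To verify that $U$ is a subrepresentation, one must check that every incoming arrow at a shrunk position lands in the corresponding $U$-piece. At $V_i^{a+d_{ii}}$ the contribution $A_i^{a+d_i}$ is immediate; for $j \neq i$, the direct cross-arrow $^{a+d_{ii}}_i\square_j^{a+d_{ii}+d_{ij}}$ is precisely the $\ell = \tfrac{c_{ij}}{2}+1$ component of $\Phi_i^a$ in \eqref{eqn:complex} (the self-loop prefix collapsing to the identity at this endpoint); and the self-loop $^{a+d_{ii}}_i\square_i^{a+2d_{ii}}$ is handled by shrinking $U_i^{a+2d_{ii}}$ to the preimage of $\mathrm{Im}(\Phi_i^a)$ under this self-loop, iterating up the tower. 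Compatibility of the shrunk $U_i^{a+kd_{ii}}$ with incoming cross-arrows from $V_j^{\cdot}$ for $j \neq i$ is where the quiver relations enter: when $c_{ij}$ is sufficiently negative, the required ``cross then $(k-1)$ self-loops on $i$'' composition is already a component of $\Phi_i^a$ at a larger value of $\ell$; for the borderline values one invokes \eqref{eqn:2} to trade a self-loop on $i$ for a self-loop on $j$, reducing to a component of $\Phi_i^a$ whose $\ell$ is in range; the moment-map relation \eqref{eqn:1 bis}, together with \eqref{eqn:5}, takes care of the remaining compatibility checks (including with the $B$ maps).

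Granted that $U$ is a subrepresentation containing $\mathrm{Im}(A)$, the second $\theta$-stability inequality demands $(\theta, \bv - \bu) \geq 0$. But $\bv - \bu$ is supported only at positions of the form $(i, a+kd_{ii})$ with $k \geq 1$, and is strictly positive at $k=1$, so
\[
(\theta, \bv - \bu) \;=\; d_i \theta_i \sum_{k \geq 1} \bigl(v_i^{a+kd_{ii}} - u_i^{a+kd_{ii}}\bigr) \;<\; 0,
\]
since $\theta_i < 0$, $d_i > 0$, and the sum of codimensions is positive. This is the desired contradiction, forcing $\Phi_i^a$ to be surjective. The main technical obstacle is the self-loop propagation and the verification that \eqref{eqn:2} and \eqref{eqn:1 bis} close up the induction; in the simply-laced case, \eqref{eqn:vanish} forces all self-loops to vanish and the recursion trivializes, recovering the classical Nakajima--King argument.
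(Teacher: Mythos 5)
Your high-level strategy is the same as the paper's: use the second $\theta$-stability condition \eqref{eqn:stable 2} on a subrepresentation containing $\mathrm{Im}\,A$, and derive a contradiction (or force equality) from $\theta_i<0$. But the subrepresentation you build is different, and this difference is where your argument runs into trouble. The paper takes $\bU = (\mathrm{Im}\,\Phi_i^b,\ V_j^b)_{j\neq i}^{b\in\BZ}$ — that is, it shrinks \emph{every} $i$-vertex $V_i^{b+d_{ii}}$ down to $\mathrm{Im}\,\Phi_i^b$ simultaneously, not just the ones in the tower $a+k d_{ii}$. With this uniform choice, incoming cross-arrows at \emph{any} $i$-vertex are literally components of the corresponding $\Phi_i^{b}$ (the $\ell=\tfrac{c_{ij}}{2}+1$ component, as you correctly note for $b=a$), so they land in $\mathrm{Im}\,\Phi_i^{b}$ for free. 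The only thing left to check is that the self-loop carries $\mathrm{Im}\,\Phi_i^b$ into $\mathrm{Im}\,\Phi_i^{b-d_{ii}}$, and the paper does this once and for all via the commutative square \eqref{eqn:commutative diagram 1}, using only \eqref{eqn:2} and \eqref{eqn:4}. Stability then forces $\bU=\bV$, which gives surjectivity of every $\Phi_i^b$ at once.

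Your choice — $U_i^{a+d_{ii}}=\mathrm{Im}\,\Phi_i^a$, iterated preimages up the tower, and $U_i^b=V_i^b$ elsewhere — is larger than the paper's $\bU$ and would still yield the contradiction, but it shifts all the difficulty into the claim that $\bU$ is a subrepresentation, and that claim is not actually established. The nontrivial check is that for every $k\geq 1$, the composite ``cross-arrow from $V_j^{a+kd_{ii}+d_{ij}}$ into $V_i^{a+kd_{ii}}$, then $(k-1)$ self-loops down to $V_i^{a+d_{ii}}$'' lands in $\mathrm{Im}\,\Phi_i^a$. For $1\leq k\leq -c_{ij}$ this is indeed the $\ell=k+\tfrac{c_{ij}}{2}$ component of $\Phi_i^a$; for $k>-c_{ij}$ one has to apply relation \eqref{eqn:2} repeatedly (each application trades $-c_{ij}$ self-loops on $i$ for $-c_{ji}$ self-loops on $j$, with a sign), and one must verify the bookkeeping closes up after roughly $\lceil (k-1)/(-c_{ij})\rceil$ trades. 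Your text says ``one invokes \eqref{eqn:2} to trade a self-loop,'' which understates what \eqref{eqn:2} actually says (it is not a single-self-loop swap) and does not carry out the induction; so as written there is a gap. A separate, smaller issue: you invoke \eqref{eqn:1 bis}, \eqref{eqn:5}, and ``compatibility with the $B$ maps.'' None of these is relevant here — a subrepresentation in the sense of Definition~\ref{def:stable} only has to be preserved by the $\square$'s, and condition \eqref{eqn:stable 2} makes no reference to $B$ or $\mathrm{Ker}\,B$ at all. Those relations are used in the \emph{next} lemma (Lemma~\ref{lem:rep}), not in this one. The cleanest fix is simply to adopt the paper's uniform $\bU$ and verify the single commutative square \eqref{eqn:commutative diagram 1}.
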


\begin{proof} Consider the collection of subspaces $\bU = (\text{Im }\Phi_i^a, V_j^a)_{j \neq i}^{a \in \BZ}$, and we claim that it is a subrepresentation. Once we show this, since $\bU$ also contains the image of all the $A$ maps, \eqref{eqn:stable 2} implies that $\bU = \bV$, as needed. Clearly, $\bU$ already contains the images of all the maps $^{a-d_{ij}}_j\square_i^a$ for $i \neq j$, so it remains to show that $^{a}_i\square_i^{a+d_{ii}}$ takes $\text{Im }\Phi_i^a$ to $\text{Im }\Phi_i^{a-d_{ii}}$. This is an immediate consequence of the commutativity of 
\begin{equation}
\label{eqn:commutative diagram 1}
\xymatrix{ W_i^{a+d_{i}} \bigoplus_{j \neq i} \bigoplus_{\ell = \frac {c_{ij}}2+1}^{-\frac {c_{ij}}2} V_j^{a+\ell d_{ii}} \ar[d]_{\Upsilon} \ar[r]^-{\Phi_i^a} & V_i^{a+d_{ii}} \ar[d]^{^{a}_i\square_i^{a+d_{ii}} }\\ W_i^{a-d_i} \bigoplus_{j \neq i} \bigoplus_{\ell = \frac {c_{ij}}2}^{-\frac {c_{ij}}2-1} V_j^{a+\ell d_{ii}} \ar[r]^-{\Phi_i^{a-d_{ii}}} &V_i^{a} }
\end{equation}
where $\Upsilon$ maps each $V_j^{a+\ell d_{ii}}$ identically onto itself if $\ell < -\frac {c_{ij}}2$, and sends 
\begin{equation}
\label{eqn:iteration}
V_j^{a-d_{ij}} \xrightarrow{- \left(^{a+d_{ij}}_j\square_j^{a-d_{ij}}\right)} V_j^{a+d_{ij}}
\end{equation}
The commutativity of \eqref{eqn:commutative diagram 1} uses relations \eqref{eqn:2} and \eqref{eqn:4}. If we wanted to stack several diagrams \eqref{eqn:commutative diagram 1} on top of each other, the induced iteration of the map $\Upsilon$ would take each $V_j^{\bullet}$ in the domain to the unique $V_j^{\bullet'}$ in the codomain such that $\bullet'-\bullet = 2d_{ij}k$ for some $k \in \BN$, via the $k$-fold iteration of the map \eqref{eqn:iteration}. \end{proof}

\noindent As a consequence of \eqref{eqn:1 bis}, it is easy to see that the linear map
\begin{equation}
\label{eqn:upsilon}
\Psi_i^a : V_i^a \xrightarrow{\begin{bmatrix} B_i^{a+d_{i}} \\ {^{a+\ell d_{ii}}_j\square_i^{a+d_{ij}+\ell d_{ii}}}\square_i^{a}  \end{bmatrix}} W_i^{a+d_{i}} \bigoplus_{j \neq i} \bigoplus_{\ell = \frac {c_{ij}}2+1}^{-\frac {c_{ij}}2} V_j^{a+\ell d_{ii}}
\end{equation}
satisfies $\Phi_i^a \circ \Psi_i^a = 0$, and thus we obtain an induced map
\begin{equation}
\label{eqn:induced map}
V_i^a \rightarrow \bar{V}_i^a = \text{Ker }\Phi_i^a
\end{equation}

\begin{lemma}
\label{lem:rep}

For any $\bV$ as in \eqref{eqn:point p}, let $\bar{V}_i^a = \emph{Ker }\Phi_i^a$ and consider the maps
\begin{align} 
&_j^{a-d_{ij}}\bar{\square}_i^a : \bar{V}_i^a \xrightarrow{\text{projection onto summand}} V_j^{a-d_{ij}}  \label{eqn:list 1} \\
&\bar{B}_i^{a+d_i} : \bar{V}_i^a  \xrightarrow{\text{projection onto summand}} W_i^{a+d_i}  \label{eqn:list 2} \\
&_i^{a-d_{ii}}\bar{\square}_i^a : \bar{V}_i^{a} \xrightarrow{\text{induced by }\Upsilon \text{ of \eqref{eqn:commutative diagram 1}}} \bar{V}_i^{a-d_{ii}}  \label{eqn:list 3} \\
&_i^a\bar{\square}_j^{a+d_{ij}} : V_j^{a+d_{ij}} \xrightarrow{^a_i\square_j^{a+d_{ij}}} V_i^a \xrightarrow{\eqref{eqn:induced map}} \bar{V}_i^a \label{eqn:list 4} \\
&\bar{A}_i^{a-d_i}:W_i^{a-d_i} \xrightarrow{A_i^{a-d_i}} V_i^a \xrightarrow{\eqref{eqn:induced map}} \bar{V}_i^a \label{eqn:list 5}
\end{align}
Together with $\bar{\square} = \square$, $\bar{A}=A$, $\bar{B}=B$ whenever the subscripts are different from $i$, the above maps determine a structure of (co)framed quiver representation on
\begin{equation}
\label{eqn:bar v}
\bar{\bV} = (\bar{V}_i^a, V_j^a)_{j \neq i}^{a \in \BZ}
\end{equation}

\end{lemma}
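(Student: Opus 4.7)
The lemma requires verifying two things: (A) each map in \eqref{eqn:list 1}--\eqref{eqn:list 5} lands in its claimed codomain, and (B) the resulting data on $\bar{\bV}$ satisfies all the relations of Definition \ref{def:new}. For (A), the projections \eqref{eqn:list 1} and \eqref{eqn:list 2} are tautological. The maps \eqref{eqn:list 4} and \eqref{eqn:list 5} invoke \eqref{eqn:induced map}, which is $\Psi_i^a$ viewed as landing in $\bar V_i^a = \ker \Phi_i^a$; this is legitimate precisely because $\Phi_i^a \circ \Psi_i^a = 0$, which is a reformulation of relation \eqref{eqn:1 bis} at vertex $i$ for $\bV$. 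The map \eqref{eqn:list 3} is well-defined because the commutativity $\Phi_i^{a-d_{ii}} \circ \Upsilon = (_i^a\square_i^{a+d_{ii}}) \circ \Phi_i^a$, proved inside Lemma \ref{lem:surj} as \eqref{eqn:commutative diagram 1}, implies that $\Upsilon$ preserves $\ker \Phi_i^\bullet$.

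For (B), any instance of \eqref{eqn:1 bis}, \eqref{eqn:2}, \eqml{eqn:4}, \eqref{eqn:5} at a vertex $j \neq i$ not involving any arrow into or out of $\bar V_i^\bullet$ is inherited from $\bV$ verbatim. The relations to be checked are (i) \eqref{eqn:4}--\eqref{eqn:5} at vertex $i$; (ii) \eqref{eqn:2} for pairs $(i,j)$ with $j \neq i$; and (iii) \eqref{eqn:1 bis} at $i$, together with the $i'=i$ summand of \eqref{eqn:1 bis} at $j \neq i$. For groups (i) and (ii), I would unpack each composition using the explicit formulas for $\Upsilon$ and $\Psi_i^a$. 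For example, $(_i^{a-d_i}\bar\square_i^{a+d_i}) \circ \bar A_i^a = \Upsilon \circ \Psi_i^{a+d_i} \circ A_i^a$; projecting onto each summand of the ambient space of $\bar V_i^{a-d_i}$, the contribution either vanishes because $\Upsilon$ is zero on the $W$-summand of its domain (a choice forced by the commutativity of \eqref{eqn:commutative diagram 1} combined with \eqref{eqn:4} for $\bV$), or because the remaining iterated $\square$-compositions collapse via \eqref{eqn:2} and \eqref{eqn:4}. Relation \eqref{eqn:5} at $i$ is dual, and \eqref{eqn:2} for $(i,j)$ is a similar projection-plus-diagram-chase using \eqref{eqn:2} and \eqref{eqn:4} of $\bV$.

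The main obstacle is group (iii), specifically relation \eqref{eqn:1 bis} at vertex $i$ for $\bar{\bV}$. Here the long composition $_i^{a+d_{ii}}\bar\square_i \cdots \bar\square_j \cdots \bar\square_i^a$ must be unfolded, via the iterated $\Upsilon$ construction described at the end of the proof of Lemma \ref{lem:surj} (the $k$-fold iteration of \eqref{eqn:iteration}), into an expression written in terms of the original $\square$-maps of $\bV$. The new correction term $\bar A_i^{a+d_i} \bar B_i^{a+d_i}$ projects onto the $W$-summand via $B_i^{a+d_{ii}}$ and re-enters $\bar V_i^{a+d_{ii}}$ via $\Psi_i^{a+d_{ii}} \circ A_i^{a+d_i}$. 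Careful book-keeping --- tracking the sign from \eqref{eqn:iteration} together with the combinatorics of the indices $\ell$ and $k$ --- should allow the unfolded sum to be matched against the original relation \eqref{eqn:1 bis} on $V_i^\bullet$ applied at the various intermediate levels, combined with the auxiliary relations \eqref{eqn:2}, \eqref{eqn:4}, \eqref{eqn:5} of $\bV$, yielding the required vanishing on $\bar V_i^a$. The $i'=i$ contribution to \eqref{eqn:1 bis} at $j \neq i$ is a simpler variant of the same argument, now played out with target $V_j^\bullet$ rather than $\bar V_i^\bullet$, and is the place where the explicit form of $_i^a\bar\square_j^{a+d_{ij}}$ as $\Psi_i^a \circ (_i^a\square_j^{a+d_{ij}})$ pays off.
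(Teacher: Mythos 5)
Your outline follows the same route as the paper: verify that the maps are well defined (the key point being that $\Upsilon$ preserves $\ker\Phi_i^\bullet$ and that $\Phi_i^a\circ\Psi_i^a=0$), dispatch the easy relations and the $i'\neq i$ case of \eqref{eqn:1 bis} quickly, and then face the one genuinely hard verification, \eqref{eqn:1 bis} at vertex $i$ for $\bar{\bV}$. Where you part ways with the paper is that for the hard step you stop at the level of strategy; the paper carries the computation through explicitly, tracking the element \eqref{eqn:initial} through the four maps in \eqref{eqn:composition}, arriving at \eqref{eqn:final 1}--\eqref{eqn:final 2}, and then reducing the required vanishing to the two identities \eqref{eqn:formula 1}--\eqref{eqn:formula 2}. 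That bookkeeping is the content of the lemma; a proof that defers it to ``careful book-keeping should allow the unfolded sum to be matched'' has not actually proven the statement.

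There is also a small but genuine imprecision in the mechanism you invoke at the end. You say the unfolded sum should be matched against ``the original relation \eqref{eqn:1 bis} on $V_i^\bullet$ applied at the various intermediate levels.'' In the paper's computation that is not what happens: relation \eqref{eqn:1 bis} on $\bV$ is consumed only once, to establish $\Phi_i^a\circ\Psi_i^a=0$ (and hence \eqref{eqn:induced map}); the actual vanishing of \eqref{eqn:final 1}+\eqref{eqn:final 2} comes from the fact that the starting tuple \eqref{eqn:initial} lies in $\ker\Phi_i^0$, together with \eqref{eqn:5}. This is a pointwise condition on the lifted element, not a relation in the path algebra, and if you try to conclude from \eqref{eqn:1 bis} alone you will find the argument does not close. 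So the outline is structurally correct, but both the execution of the hard case and the identification of the exact closing identity are missing.
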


\begin{proof} We must prove that the (co)framed quiver representation $\bar{\bV}$ satisfies relations \eqref{eqn:1 bis} and \eqref{eqn:2}, \eqref{eqn:4}, \eqref{eqn:5}. Let us prove the first of these, which is the most involved one, and leave the rest as exercises to the reader. Because the composition
$$
V_{i'}^{a+d_{ii'}} \rightarrow \bar{V}_i^a \rightarrow V_{i'}^{a-d_{ii'}}	
$$
coincides with the same-named composition for $V$ instead of $\bar{V}$, then relation \eqref{eqn:1 bis} for any other $i' \neq i$ follows. It remains to check \eqref{eqn:1 bis} for the same $i$ as in Lemma \ref{lem:rep}. To this end, for any $j\neq i$ and $\ell \in \{0,\dots,-c_{ij}-1\}$, consider the composition
\begin{equation}
\label{eqn:composition}
\bar{V}_i^0 \xrightarrow{\left(^{1}_i\square_{i}^{-\ell-2p} \right) \left(^{-\ell-2p}_i\square_{j}^{-\ell - p} \right) 
\left(^{-\ell-p}_j\square_{i}^{-\ell} \right) \left(^{-\ell}_i\square_{i}^{0} \right)} \bar{V}_i^{1} 
\end{equation}
Above and hereafter, the superscript $\ell \in \BZ/2$ should be interpreted as $a+\ell d_{ii}$, and we abbreviate $c_{ij} = 2p \leq 0$ in order to keep our formulas concise. The first map (from right to left) in the composition \eqref{eqn:composition} takes any element
\begin{equation}
\label{eqn:initial}
\left(w_i^{\frac 12}, v_{j'}^{\ell'}\right)^{j' \neq i}_{p' +1 \leq \ell' \leq -p'} \in \text{Ker }\Phi_i^0
\end{equation}
(we use $j' \neq i$ for an index that may differ from $j$ in \eqref{eqn:composition}, and write $c_{ij'} = 2p'$) to 
$$
\left(\begin{cases} w_i^{\frac 12} &\text{if } \ell = 0 \\ 0 &\text{otherwise}\end{cases} , {(-1)^{\frac {\bar{\ell'}-\ell'}{2p'}}} {^{\bar{\ell'}}_{j'}\square_{j'}^{\ell'}} \left(v_{j'}^{\ell'}\right) \right)^{j' \neq i}_{\text{unique } \bar{\ell'} \equiv_{2p'} \ell' \text{ s.t. } p'-\ell+1 \leq \bar{\ell'} \leq -p' - \ell} \in \text{Ker }\Phi_i^{-\ell}
$$
The second map (from right to left) in \eqref{eqn:composition} takes the above element to
$$
v_j^{-\ell-p} \in V_j^{-\ell-p}
$$
In order to apply the third map (from right to left) in the composition \eqref{eqn:composition}, let us recall that it is given by the two-step process \eqref{eqn:list 4}. As such, we must first calculate 
$$
r = \ ^{-\ell-2p}_i\square_j^{-\ell-p} \left(v_j^{-\ell-p} \right) \in V_i^{-\ell-2p}
$$
and then the third map (from right to left) in \eqref{eqn:composition} will produce the following output
$$
\left( B_i^{-\ell-2p+\frac 12}(r),  {^{\ell'-\ell-2p}_{j'}\square_i^{\ell'-\ell+p'-2p}} \square_i^{-\ell-2p} (r) \right)^{j' \neq i}_{p'+1 \leq \ell' \leq - p'} \in \text{Ker }\Phi_i^{-\ell-2p}
$$
Finally, the fourth map sends the above element to
\begin{equation}
\label{eqn:final 1}
\left( \begin{cases} B_i^{-\ell-2p+\frac 12}(r) &\text{if } \ell = -2p-1\\ 0 &\text{otherwise}\end{cases}, \right.
\end{equation}
$$
\left. \underbrace{ {(-1)^{\frac {\bar{\ell'}-\ell'+\ell+2p}{2p'}}} {^{\bar{\ell'}}_{j'}\square_{j'}^{\ell'-\ell-2p}} \square_i^{\ell'-\ell+p'-2p} \square_i^{-\ell-2p}}_{ \stackrel{\eqref{eqn:2}}= {^{\bar{\ell'}}_{j'}\square_{i}^{\bar{\ell'}+p'}} \square_i^{-\ell-2p}}  (r) \right)^{j' \neq i}_{\text{unique } \bar{\ell'} \equiv_{2p'} \ell'-\ell-2p  \text{ s.t. } p'+2 \leq \bar{\ell'} \leq -p' +1} 
$$
in $ \text{Ker }\Phi_i^1$. Meanwhile, $A_i^{\frac 12}B_i^{\frac 12}$ takes \eqref{eqn:initial} to
\begin{equation}
\label{eqn:final 2}
\left( B_i^{\frac 32}A_i^{\frac 12}\left(w_i^{\frac 12} \right), {^{\bar{\ell'}}_{j'}\square_i^{\bar{\ell'}+p'}} \square_i^1 \left( A_i^{\frac 12}\left(w_i^{\frac 12}\right) \right) \right)^{j' \neq i}_{p'+2 \leq \bar{\ell'} \leq -p'+1} 
\end{equation}
We must show that the sum (over all $j\neq i$ and all $0 \leq \ell \leq -2p-1$) of \eqref{eqn:final 1} plus \eqref{eqn:final 2} is equal to 0. On the first component, the thing we need to check is the formula
\begin{equation}
\label{eqn:formula 1}
B_i^{\frac 32} \left(A_i^{\frac 12}\left(w_i^{\frac 12}\right) + \sum_{j\neq i} \left(^{1}_i\square_j^{p+1}\right)\left(v_j^{p+1}\right) \right) = 0
\end{equation}
while on the component indexed by $j' \neq i$ and $\bar{\ell'} \in \{p'+2,\dots,-p'+1\}$, we need
\begin{equation}
\label{eqn:formula 2}
 {^{\bar{\ell'}}_{j'}\square_i^{\bar{\ell'}+p'}} \square_i^1 \left( A_i^{\frac 12}\left(w_i^{\frac 12}\right) \right) + \sum_{j\neq i} \sum_{\ell = 0}^{-2p-1} {^{\bar{\ell'}}_{j'}\square_{i}^{\bar{\ell'}+p'}} \square_i^{-\ell-2p} \square_j^{-\ell-p} \left(v_j^{-\ell-p} \right) = 0
\end{equation}
Both formulas \eqref{eqn:formula 1} and \eqref{eqn:formula 2} are immediate consequences of the fact that \eqref{eqn:initial} lies in $\text{Ker }\Phi_i^0$ (one also needs to use \eqref{eqn:5} as part of the proof of \eqref{eqn:formula 1}). \end{proof}


\noindent It is easy to see that the dimension of $\bar{\bV}$ of \eqref{eqn:bar v} is precisely the function $S_i$ of \eqref{eqn:weyl dimension vector def} applied to $\dim \bV$. Thus, the assignment of Lemma \ref{lem:rep} induces a function
$$
P_{\bv,\bw}^{\theta\text{-stab}} \rightarrow P_{S_i(\bv),\bw}
$$
To conclude the proof of Proposition \ref{prop:technical}, and with it that of Theorem \ref{thm:main}, we must show that the function above takes values in the $s_i(\theta)$-stable locus. We do this by adapting the argument in \cite[Section 3(iii)]{Na}, as follows.

\begin{proof} \emph{of Proposition \ref{prop:technical}:} Let us write $s_i(\theta) = \bar{\theta}$. It is an immediate consequence of \eqref{eqn:stability condition} and the fact that $s_i(\omega_j) = \omega_j - \delta_{ij} \alpha_i$ that the coefficients of $\bar{\theta}$ are given by
\begin{equation}
\label{eqn:bar theta}
\bar{\theta}_j = \theta_j - \theta_i c_{ji}
\end{equation}
Consider (co)framed quiver representations $\bV$ and $\bar{\bV}$ as in Lemma \ref{lem:rep}, and assume we have a subrepresentation
\begin{equation}
\label{eqn:subrepresentation}
\bar{\bU} \text{ consisting of } \Big\{\bar{U}_i^a \subseteq \bar{V}_i^a, \ U_j^a \subseteq V_j^a \Big\}_{j \neq i}^{a \in \BZ}
\end{equation}
which is contained inside $\text{Ker }B$. Then we consider the collection of vector subspaces $\bU \subseteq \bV$ defined by keeping all $U_j^a$ for $j \neq i$ as above and letting $U_i^a$ denote the image of the oblique map in the following diagram
$$
\xymatrix{0 \ar[r] & \bar{U}_i^{a-d_{ii}} \ar[r] \ar@{^{(}->}[d] &  0 \bigoplus_{j \neq i} \bigoplus_{\ell = \frac {c_{ij}}2}^{-\frac {c_{ij}}2-1} U_j^{a+\ell d_{ii}} \ar[rd] \ar@{^{(}->}[d] &  & \\ 0 \ar[r] & \bar{V}_i^{a-d_{ii}} \ar[r] &  W_i^{a-d_{i}} \bigoplus_{j \neq i} \bigoplus_{\ell = \frac {c_{ij}}2}^{-\frac {c_{ij}}2-1} V_j^{a+\ell d_{ii}} \ar[r] & V_i^a \ar[r] & 0}
$$
If we let $u_i^a = \dim U_i^a$, $u_i = \sum_{a \in \BZ} u_i^a$, $\bu = (u_i^a)_{i \in I}^{a \in \BZ}$ (and similarly for $\bar{u}$), we have
$$
u_i^a \leq - \bar{u}_i^{a-d_{ii}} + \sum_{j \neq i} \sum_{\ell = \frac {c_{ij}}2}^{-\frac {c_{ij}}2-1} u_j^{a+\ell d_{ii}} \quad \Rightarrow \quad u_i \leq - \bar{u}_i - \sum_{j \neq i} c_{ij} u_j
$$
Therefore, we have
\begin{equation}
\label{eqn:ineq}
(\theta, \bu) = d_i \theta_i u_i + \sum_{j \neq i} d_j \theta_j u_j \geq  - d_i \theta_i \bar{u}_i +  \sum_{j \neq i} (- d_i c_{ij} \theta_i + d_j \theta_j) u_j \stackrel{\eqref{eqn:bar theta}}= (\bar{\theta}, \bar{\bu}) 
\end{equation}
In what follows, we will write $U_i = \oplus_{a \in \BZ} U_i^a$ etc. The fact that $(_i\square_j)(U_j) \subseteq U_i$ for any $j \neq i$ is a trivial consequence of the fact that mapping from $\bar{V}_i$ to $V_j$ is via the natural projection onto a direct summand. Meanwhile, the fact that $(_j\square_i)(U_i) \subseteq U_j$ for any $j\neq i$ and that $U_i$ lies in the kernel of $B$ is an easy (and left to the reader) consequence of the fact that mapping from $U_j$ to $\bar{U}_i \subseteq \bar{V}_i$ is via the map \eqref{eqn:upsilon}. Finally, the fact that $(_i\square_i)(U_i) \subseteq U_i$ follows from relation \eqref{eqn:2}. We have thus constructed a subrepresentation $\bU \subseteq \bV$ which is contained in the kernel of $B$. Since the LHS of \eqref{eqn:ineq} is $\leq 0$ by virtue of the stability of $\bV$, we infer that the RHS is also $\leq 0$, thus proving that $\bar{\bU} \subseteq \bar{\bV}$ satisfies inequality \eqref{eqn:stable 1}.

\medskip 

\noindent Now let us consider a subrepresentation \eqref{eqn:subrepresentation} which contains $\text{Im }A$. We consider the collection of vector subspaces $\bU \subseteq \bV$ defined by keeping all $U_j^a$ for $j \neq i$ as above and letting $U_i^a$ to be the image of the oblique map in the following diagram
$$
\xymatrix{0 \ar[r] & \bar{U}_i^{a-d_{ii}} \ar[r] \ar@{^{(}->}[d] & W_i^{a-d_i} \bigoplus_{j \neq i} \bigoplus_{\ell = \frac {c_{ij}}2}^{-\frac {c_{ij}}2-1} U_j^{a+\ell d_{ii}} \ar[rd] \ar@{^{(}->}[d] &  & \\ 0 \ar[r] & \bar{V}_i^{a-d_{ii}} \ar[r] &  W_i^{a-d_{i}} \bigoplus_{j \neq i} \bigoplus_{\ell = \frac {c_{ij}}2}^{-\frac {c_{ij}}2-1} V_j^{a+\ell d_{ii}} \ar[r] & V_i^a \ar[r] & 0}
$$
The fact that $\bU \subseteq \bV$ is a subrepresentation which contains $\text{Im } A$ is completely analogous to the preceding paragraph, and so we omit it. We have
$$
u_i^a \leq - \bar{u}_i^{a-d_{ii}} + w_i^{a-d_i} + \sum_{j \neq i} \sum_{\ell = \frac {c_{ij}}2}^{-\frac {c_{ij}}2-1} u_j^{a+\ell d_{ii}} \quad \Rightarrow \quad u_i \leq - \bar{u}_i + w_i - \sum_{j \neq i} c_{ij} u_j
$$
where $w_i = \sum_{a \in \BZ} w_i^a$ and $w_i^a= \dim W_i^a$. Therefore, as in \eqref{eqn:ineq} we conclude that
$$
(\theta, \bu) \geq d_i \theta_iw_i + (\bar{\theta}, \bar{\bu}) 
$$
However, we also have
$$
(\bar \theta, \bar{\bv}) = d_i \bar{\theta}_i \bar{v}_i + \sum_{j \neq i} d_j \bar{\theta}_j v_j = -d_i \theta_i (w_i-v_i-\sum_{j \neq i} c_{ij} v_j) + \sum_{j \neq i} d_j (\theta_j-c_{ji}\theta_i)v_j  = -d_i \theta_i w_i + (\theta,\bv)
$$
Comparing the above displays shows that $(\theta,\bv-\bu) \leq (\bar{\theta},\bar{\bv}-\bar{\bu})$. Since $(\theta,\bv-\bu) \geq 0$ by virtue of the stability of $\bV$, we infer that $(\bar{\theta},\bar{\bv}-\bar{\bu}) \geq 0$, thus proving that $\bar{\bU} \subseteq \bar{\bV}$ satisfies inequality \eqref{eqn:stable 2}. \end{proof}

\subsection{The ungraded case} Let us speculate on the ungraded analogues of the spaces in Subsections \ref{sub:quiver variety new} and \ref{sub:stability}. For any $v = (v_i)_{i \in I}$ and $w = (w_i)_{i \in I} \in \nn$, let
$$
P_{v,w} = \Big\{ V_j \xleftarrow{_j\square_i} V_i,\quad V_i \xleftarrow{A_i} W_i \xleftarrow{B_i} V_i \Big \}_{i,j \in I}
$$
(with $\dim V_i = v_i$, $\dim W_i = w_i$) denote the stack of linear maps which satisfy relations \eqref{eqn:2}, \eqref{eqn:1 bis}, \eqref{eqn:4}, \eqref{eqn:5} without any of the superscripts. If we let $P_{v,w}^{\theta\text{-stab}} \subset P_{v,w}$ denote the open subset of $\theta$-stable points as in Definition \ref{def:stable}, then we postulate that the analogues of the functions $\mathscr{S}_i$ satisfy the Weyl group relations (see \cite{L, M, Na} in the simply-laced case, and \cite{TY} in the general case but without stability).

\end{document}